\newtheorem{thm}{Theorem}[section]
\newtheorem*{thm*}{Theorem}
\newtheorem{cor}[thm]{Corollary}
\newtheorem*{cor*}{Corollary}
\newtheorem{defthm}[thm]{Definition-Theorem} 
\newtheorem{prop}[thm]{Proposition}
\newtheorem*{prop*}{Proposition}
\newtheorem{lem}[thm]{Lemma}
\newtheorem*{lem*}{Lemma}
\newtheorem*{conj*}{Conjecture}
\newtheorem*{quest*}{Question}
\theoremstyle{definition}
\newtheorem{defn}[thm]{Definition}
\theoremstyle{remark}
\newtheorem{rem}[thm]{Remark}
\newtheorem*{ack*}{Acknowledgements}
\newcommand{\stmodsf}{\underline{\mathsf{mod}}\ \! } 
\newcommand{\modsf}{\mathsf{mod}\ \!} 
\newcommand{\Modsf}{\mathsf{Mod}\ \!} 
\newcommand{\N}{\mathbb{N}} 
\newcommand{\m}{\mathfrak{m}} 
\newcommand{\depth}{{\rm depth}\ \!} 
\newcommand{\dell}{{\rm dell}\ \!} 
\newcommand{\pdim}{{\rm pdim}\ \!} 
\newcommand{\findim}{{\rm findim}\ \!} 
\newcommand{\Findim}{{\rm Findim}\ \!} 
\title[The finitistic dimension for algebras with radical square zero]{The finitistic dimension of an Artin algebra with radical square zero} 
\author{Vincent G\'elinas}
\address{vincent.gelinas@mail.utoronto.ca} 
\date{}
\begin{document}

\begin{abstract} We investigate the inequality $\Findim \Lambda^{op} \leq \dell \Lambda$ between the finitistic dimension and the delooping level of an Artin algebra $\Lambda$, and whether equality holds in general. We prove that equality $\Findim \Lambda^{op} = \dell \Lambda$ always holds for Artin algebras with radical square zero. 
\end{abstract}

\maketitle 
\setlength{\parskip}{5pt}

\section*{Introduction} 
The little finitistic dimension $\findim \Lambda$ and big finitistic dimension $\Findim \Lambda$ are important homological invariants of Artin algebras $\Lambda$. However, understanding their values in terms of more basic invariants of $\Lambda$ remains difficult. 

In the setting of commutative local Noetherian rings $R = (R, \m, k)$, the analogous question has a satisfactory answer: the little finitistic dimension $\findim R = \depth R$ is equal to the depth of $R$ by the Auslander--Buchsbaum formula, and the big finitistic dimension $\Findim R = \dim R$ equals its Krull dimension by theorems of Bass \cite{Ba62} and Gruson--Raynaud \cite{GR71}. 

The depth of $R$ can be defined in terms of the Ext functor via ${\rm Ext}^*_R(k, R)$, and so admits a natural extension to the setting of Noetherian semiperfect rings $\Lambda$, and in particular to that of Artin algebras (\cite[Section 1]{Ge20}). In this situation, the Auslander--Buchsbaum equality is replaced by an inequality due to Jans \cite{Ja61} 
\begin{align*} 
	\depth \Lambda \leq \findim \Lambda^{op} 
\end{align*}
between the (right) depth of $\Lambda$ and its (left) little finitistic dimension. However, equality fails to hold in general. In \cite{Ge20}, the author introduced a new invariant $\dell \Lambda$ for Noetherian semiperfect rings $\Lambda$, taking value in $\N \cup \{ \infty \}$, which refines Jans' inequality to 
\begin{align*}
	\depth \Lambda \leq \findim \Lambda^{op} \leq \dell \Lambda
\end{align*}
and gives rise for Artin algebras $\Lambda$ to finer inequalities  
\begin{align*}
	\depth \Lambda \leq \findim \Lambda^{op} \leq \Findim \Lambda^{op} \leq \dell \Lambda.
\end{align*}

This new invariant was shown to recover the depth for commutative local Noetherian rings $R$ in that $\depth R = \dell R$, so that the above collapse to equalities in the commutative case. Similar results hold for Noetherian semiperfect rings $\Lambda$ satisfying certain cohomological conditions (see \cite[Thm. 2.12]{Ge20}). However, the above inequalities are strict in general.  

In this paper, we restrict ourselves to the setting of Artin algebras $\Lambda$. Of the three inequalities 
\begin{align*}
	\depth \Lambda \leq \findim \Lambda^{op} \leq \Findim \Lambda^{op} \leq \dell \Lambda.
\end{align*}

the first two $\depth \Lambda \leq \findim \Lambda^{op}$ and $\findim \Lambda^{op} \leq \Findim \Lambda^{op}$ are known to be strict in some examples; however, the last inequality 
\begin{align*}
	\Findim \Lambda^{op} \leq \dell \Lambda 
\end{align*}
is less understood. In \cite{Ge20}, it was shown to be a form of the ``Cohen-Macaulay'' property. The following curious result was observed: 

\begin{thm*}[{\cite[Thm. 4.1]{Ge20}}] Let $\Lambda$ be a Noetherian semiperfect ring, and consider the inequality $\Findim \Lambda^{op} \leq \dell \Lambda$. 
	\begin{enumerate}[i)] 
	\item The inequality characterises Cohen-Macaulay rings amongst commutative local Noetherian rings, in which case we have equality.
	\item The inequality holds for all Gorenstein rings, in which case we have equality. 
	\item The inequality holds for Artinian rings. 
\end{enumerate}
\end{thm*}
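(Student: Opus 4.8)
The three parts require different ingredients.

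\emph{(i).} For a commutative local Noetherian ring $R$ this is immediate: one has $\Findim R = \dim R$ by \cite{Ba62, GR71} and $\dell R = \depth R$ by the identification from \cite{Ge20}, while $\depth R \le \dim R$ always holds. Hence $\Findim R \le \dell R$ is equivalent to $\dim R \le \depth R$, i.e.\ to $\dim R = \depth R$, which is precisely the Cohen--Macaulay condition; and when it holds both sides equal the common value $\dim R = \depth R$, so equality holds.

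\emph{(ii).} Let $\Lambda$ be Gorenstein, i.e.\ Noetherian with $\idim {}_\Lambda\Lambda = \idim \Lambda_\Lambda = d < \infty$. It is classical that $\Findim \Lambda^{op} = d$ (over such a ring finite projective dimension forces projective dimension $\le d$, and a module of projective dimension exactly $d$ exists). For the other side I would bound $\dell \Lambda$ directly: for every module $M$ the syzygy $\Omega^d M$ is Gorenstein-projective, and $\Omega$ restricts to an autoequivalence of the stable category of Gorenstein-projective modules, so $\Omega^d M$ is stably a genuine syzygy and therefore deloops; thus $\dell M \le d$ for every $M$, whence $\dell \Lambda \le d$. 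Equality then follows either from \cite[Thm. 2.12]{Ge20} (Gorenstein rings satisfy the cohomological hypotheses there, which force $\dell \Lambda = \idim {}_\Lambda\Lambda = d$), or, for commutative $R$, simply by reducing to part (i) since Gorenstein rings are Cohen--Macaulay.

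\emph{(iii).} Let $\Lambda$ be Artinian. Given a right $\Lambda$-module $M$ with $n := \pdim_{\Lambda^{op}} M < \infty$ and $n \ge 1$, I must produce a simple left module $S$ with $\dell_\Lambda S \ge n$, so that $\dell \Lambda = \dell(\Lambda/\rad\Lambda) \ge n$. Choose a minimal projective resolution $0 \to P_n \xrightarrow{f} P_{n-1} \to \cdots \to P_0 \to M \to 0$ of right $\Lambda$-modules — such resolutions exist even for non--finitely-generated $M$ because an Artinian ring is perfect, and this is exactly what will push the conclusion from $\findim$ up to $\Findim$. Minimality makes $f$ a radical monomorphism, so applying $\text{Hom}_\Lambda(-,\Lambda)$ turns it into a radical map $f^* \colon P_{n-1}^* \to P_n^*$ of left projectives which cannot be surjective since $P_n^* \neq 0$; hence $\text{Ext}^n_\Lambda(M,\Lambda) = \operatorname{coker}(f^*) \neq 0$. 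It remains to convert this non-vanishing into an obstruction to delooping: splicing the dual complex $\text{Hom}_\Lambda(P_\bullet,\Lambda)$ with the transpose $\text{Tr}\, M$ and iterating the Auslander--Bridger exact sequence exhibits an $(n-1)$-st syzygy built from $\text{Tr}\, M$ whose failure to be an $n$-fold stable syzygy is measured by the class in $\text{Ext}^n_\Lambda(M,\Lambda)$; extracting a simple module from this situation yields the required $S$ with $\dell_\Lambda S \ge n$. The step I expect to be the main obstacle is precisely this last one: carrying out the Auslander--Bridger bookkeeping correctly — the dimension shift relating $\text{Tr}$ with iterated syzygies, and the handling of projective and injective summands in the stable category — and, crucially, verifying that the whole chain survives the passage to big modules $M$, which is exactly the point distinguishing the $\Findim \Lambda^{op} \le \dell \Lambda$ statement from the previously known $\findim \Lambda^{op} \le \dell \Lambda$.
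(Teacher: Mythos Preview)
This theorem is not proved in the present paper: it is quoted from \cite[Thm.~4.1]{Ge20}, and the paper only supplies the short discussion in the paragraph following the statement. So there is no ``paper's own proof'' to compare against beyond that paragraph. Your treatment of (i) matches that discussion verbatim: $\Findim R=\dim R$ and $\dell R=\depth R$ reduce the inequality to $\dim R\le\depth R$, i.e.\ to Cohen--Macaulayness, with equality when it holds.

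For (ii) the paper does not argue directly but attributes the equality $\Findim\Lambda^{op}=\dell\Lambda$ to results of Angeleri-H\"ugel--Herbera--Trlifaj \cite{AHHT06}. Your argument is different and mostly sound, but watch the direction: you prove $\dell\Lambda\le d$ (via $\Omega^d M$ being Gorenstein-projective and $\Omega$ an autoequivalence on that stable category), and you have $\Findim\Lambda^{op}=d$; this yields $\dell\Lambda\le\Findim\Lambda^{op}$, the \emph{reverse} of the stated inequality. To close the loop you need $d\le\dell\Lambda$. The cleanest way is the general inequality $\findim\Lambda^{op}\le\dell\Lambda$ for Noetherian semiperfect rings (recorded in the introduction of the present paper), together with $\findim\Lambda^{op}=\Findim\Lambda^{op}=d$ for Gorenstein $\Lambda$. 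Your appeal to \cite[Thm.~2.12]{Ge20} may also work, but you should check it is not circular.

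For (iii) you yourself flag the gap, and it is real. You correctly obtain $\mathrm{Ext}^n_\Lambda(M,\Lambda)\ne 0$ from a minimal resolution (and your remark that perfectness of Artinian rings supplies minimal covers of big modules is exactly the right observation for passing from $\findim$ to $\Findim$). But the passage from $\mathrm{Ext}^n_\Lambda(M,\Lambda)\ne 0$ to ``some simple right module $S$ has $\dell S\ge n$'' is the entire content of the result, and your sketch (``splicing the dual complex with $\mathrm{Tr}\,M$ and iterating Auslander--Bridger'') does not pin down which simple $S$ fails to deloop or why the obstruction class survives $n$ stable deloopings. Since the paper defers the proof to \cite{Ge20}, there is nothing here to compare your sketch to; but as written, part (iii) is a plan rather than a proof.
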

In the first two cases, the inequality is actually an equality: when $R = (R, \m, k)$ is commutative local Noetherian the inequality boils down to $\dim R \leq \depth R$, which immediately forces $\dim R = \depth R$. For noncommutative Gorenstein rings $\Lambda$, the equality $\Findim \Lambda^{op} = \dell \Lambda$ is a consequence of powerful results of Angeleri-H\"ugel--Herbera--Trlifaj \cite{AHHT06}. This leaves the case of Artinian rings open, and leads to the natural question: 

\begin{quest*}[{\cite[Q.4.2]{Ge20}}] Does $\Findim \Lambda^{op} = \dell \Lambda$ hold for all Artin algebras $\Lambda$? 
\end{quest*} 

In this paper, we show that this question has a positive answer when $\Lambda$ has radical square zero. Our main result is: 

\begin{thm*}[Thm. \ref{maintheorem}] Let $\Lambda$ be an Artin algebra with radical square zero. Then
	\begin{align*}
		\Findim \Lambda^{op} = \dell \Lambda.	 
	\end{align*} 
\end{thm*}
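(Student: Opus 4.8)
The plan is to prove the two inequalities $\Findim \Lambda^{op} \leq \dell \Lambda$ and $\dell \Lambda \leq \Findim \Lambda^{op}$ separately; the first holds for all Artin algebras by the cited Theorem, so the real content is the reverse inequality $\dell \Lambda \leq \Findim \Lambda^{op}$ in the radical square zero case. The starting observation is that for $\Lambda$ with $\rad^2 \Lambda = 0$ one has very explicit control over syzygies of semisimple modules: if $S$ is a simple right $\Lambda$-module then $\Omega S$ is a semisimple module (it is a submodule of $\rad P$ for the projective cover $P \twoheadrightarrow S$, and $\rad P \subseteq \mathsf{soc}\, P$ since $\rad^2 = 0$). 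Hence every syzygy $\Omega^n M$ of any finitely generated module $M$, for $n \geq 1$, is semisimple, and the entire stable module theory is governed by the combinatorics of the separated quiver of $\Lambda$, i.e. by how the simple modules appear in the tops of syzygies of simples. I would encode this by the function sending a simple $S$ to the set of composition factors of $\Omega S$, equivalently by the $\Z_{\geq 0}$-matrix recording multiplicities, and reduce both $\Findim \Lambda^{op}$ and $\dell \Lambda$ to statements about this matrix/quiver.

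Next I would recall the definition of delooping level: $\dell \Lambda$ is the smallest $n$ such that for every simple module $S$, the syzygy $\Omega^n S$ is a direct summand of some $\Omega^{n+1} M$ (it "deloops"), i.e. $\Omega^n S$ is a syzygy of a syzygy. Because all syzygies of simples past step one are semisimple, $\dell \Lambda$ is governed by whether a given semisimple module occurs as a direct summand of some $\Omega^{n+1}$ of a finite module, and this is a reachability/stabilisation property in the separated quiver. In parallel, by a result going back to the analysis of radical square zero algebras, $\Findim \Lambda^{op}$ is controlled by the projective dimensions of simple left modules, equivalently by paths in the separated quiver that eventually die. I would make precise that both invariants equal the same explicit combinatorial quantity attached to the separated quiver — roughly, the length of the longest "transient" part before the quiver settles into its eventual periodic/recurrent behaviour — and conclude equality. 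Concretely, one shows $\Omega^n S$ deloops exactly when every simple summand appearing in it lies on a cycle (or feeds only into cycles) of the syzygy combinatorics, and $\Findim \Lambda^{op}$ measures precisely the first $n$ at which this happens uniformly over all $S$.

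The technical heart, and the step I expect to be the main obstacle, is relating the delooping condition on the right (about summands of $\Omega^{n+1} M$ where $M$ ranges over \emph{all} finitely generated modules, not just simples) to the finitistic dimension on the opposite side, since delooping allows $M$ to be arbitrary and one must show that nothing is gained by this extra freedom beyond the semisimple syzygies already in play, and one must transport the statement across $\Lambda \leftrightarrow \Lambda^{op}$ via a duality or a transpose argument that behaves well with respect to the separated quiver (the separated quiver of $\Lambda^{op}$ being the opposite of that of $\Lambda$). I would handle the first point by noting $\Omega^{n+1} M$ for $n \geq 1$ is semisimple, so "being a summand of some $\Omega^{n+1} M$" is equivalent to "being a summand of $\Omega^{n+1}$ of a direct sum of simples," reducing everything to simples; and the second point by a careful bookkeeping of which simple left modules have finite projective dimension, matched against which simple right modules fail to deloop, using that $\mathsf{pd}_{\Lambda^{op}} S < \infty$ corresponds to the absence of infinite backward paths in the separated quiver from the vertex of $S$. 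Assembling these, the equality $\Findim \Lambda^{op} = \dell \Lambda$ follows by showing both sides equal $\sup$ over simples of this path-length invariant, with the one subtlety that one must check the supremum is attained (finiteness), which holds here because the separated quiver is finite, so its transient behaviour stabilises in boundedly many steps.
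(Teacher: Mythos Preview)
Your overall strategy---reduce both invariants to the combinatorics of how simples appear in syzygies of simples, using that $\Omega M$ is semisimple whenever $\rad^2 \Lambda = 0$---is exactly the paper's strategy. But the execution has two genuine gaps, and you are missing the organising device that makes the paper's argument clean.

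First, your delooping criterion ``every simple summand of $\Omega^n S$ lies on a cycle or feeds only into cycles'' is not the right condition. The paper's Proposition~\ref{PropB} shows $\dell S_i \leq n$ holds iff for every length-$n$ path $j \to \cdots \to i$ in the valued quiver $\Gamma$, either $j$ is a source or there is a length-$n$ path from $j$ ending at a \emph{non-sink}. The length constraint matters: it is not enough that $j$ eventually reach a cycle; one needs a path of exactly length $n$ to a vertex with an outgoing arrow. Related to this, in your reduction ``being a summand of some $\Omega^{n+1} M$ is equivalent to being a summand of $\Omega^{n+1}$ of a direct sum of simples'' you omit the crucial constraint: the simples appearing in $\Omega M$ are precisely those $S_k$ with $S_k^* \neq 0$ (those embedding in $\Lambda_\Lambda$), i.e.\ those whose vertex is a source or not a sink. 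Without tracking this, you cannot recover the correct delooping condition.

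Second, the paper does not show that both sides equal a single path-length invariant, because they do not: each can be either $s$ or $s+1$, where $s = \sup\{\pdim T : T \text{ simple left, } \pdim T < \infty\}$. The paper instead uses Mochizuki's result $s \leq \Findim \Lambda^{op} \leq s+1$, proves $\dell \Lambda \leq s+1$ combinatorially, obtaining the sandwich $s \leq \Findim \Lambda^{op} \leq \dell \Lambda \leq s+1$, and then handles the one remaining case $\dell \Lambda = s+1$ by extracting from Proposition~\ref{PropB} a vertex $j$ which is not a source and has $\ell(\downarrow j) = s$; this gives a simple left module $T_j$ with $\pdim T_j = s$ and $T_j^* \neq 0$, hence a left module of projective dimension $s+1$. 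Your plan to match both invariants to a common formula would need to rediscover this $s$-versus-$(s+1)$ dichotomy and the $T_j^* \neq 0$ condition, which you have not isolated.
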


We additionally give a combinatorial formula for $\dell \Lambda$, and therefore for $\Findim \Lambda^{op}$, in terms of the valued quiver of $\Lambda$ (Proposition \ref{PropB}). This theorem gives some hope that the answer to Question 4.2 should be positive in general.

\subsection*{Conventions and notation} By an Artin algebra $\Lambda$, we will always mean an $R$-algebra $\Lambda$ which is module finite over a commutative Artinian ring $R$. We let $\Modsf \Lambda$ stand for the category of right $\Lambda$-modules, and $\modsf \Lambda$ for the full subcategory of finitely presented right modules. We identify left modules with right modules over the opposite ring $\Lambda^{op}$. Without further description, the term module always refers to a finitely presented right $\Lambda$-module. We let $D: \modsf \Lambda \to \modsf \Lambda^{op}$ stand for the duality between right and left $\Lambda$-modules, see \cite[II.3]{ARS95}.  

We denote by $\stmodsf \Lambda$ the stable module category of $\Lambda$, whose objects agree with those of $\modsf \Lambda$ and whose morphisms are module homomorphism modulo homomorphisms factoring through projectives. We refer to the retracts and isomorphisms in $\stmodsf \Lambda$ as \emph{stable retracts} and \emph{stable equivalences}, respectively. It is well known that, for $X, Y \in \modsf \Lambda$, the module $X$ is a stable retract of $Y$ in $\stmodsf \Lambda$ if and only if $X$ is a retract of $Y \oplus P$ in $\modsf \Lambda$ for some projective $P$, and moreover $X, Y$ are stably equivalent if and only if $X \oplus Q \cong Y \oplus P$ in $\modsf \Lambda$ for some projectives $P, Q$ (see e.g. \cite[Lemma 0.1]{Ge20}).

\section{The main invariants} 
Let $\Lambda$ be an Artin algebra with radical ${\sf r}$ satisfying ${\sf r}^2 = 0$. From now on $\Lambda$ will always denote such an algebra with radical square zero, unless specified otherwise. We write $\{T_1, T_2, \dots, T_n \}$ for a complete set of simple left modules in $\modsf \Lambda^{op}$, and similarly $\{S_1, S_2, \dots, S_n \}$ for a complete set of simple right modules in $\modsf \Lambda$. The little and big finitistic dimensions of $\Lambda$ are defined respectively by
\begin{align*}
	\findim \Lambda &= \sup \{ \pdim M \ | \ M \in \modsf \Lambda \textup{ and } \pdim M < \infty \}	\\
	\Findim \Lambda &= \sup \{ \pdim M \ | \ M \in \Modsf \Lambda \textup{ and } \pdim M < \infty \}. 
\end{align*}
More precisely these are the right finitistic dimensions of $\Lambda$, and the left finitistic dimensions are given by $\findim \Lambda^{op}$ and $\Findim \Lambda^{op}$. We will be particularly interested in the latter. 

We next define the invariant $\dell \Lambda$, called ``delooping level'' in \cite{Ge20}. The reader is referred to \cite[Section 1]{Ge20} for more details. First, we let $$\Omega: \stmodsf \Lambda \to \stmodsf \Lambda$$ denote the syzygy functor on the stable module category, which can computed on objects as the kernel $\Omega M = {\rm ker}(P \twoheadrightarrow M)$ of an epimorphism from a projective. Let $$\Sigma: \stmodsf \Lambda \to \stmodsf \Lambda$$ be the left adjoint to $\Omega$, which was introduced by Auslander--Reiten \cite{AR96}. Since the pair $(\Sigma, \Omega)$ consists of adjoint endofunctors, taking powers gives rise to a series of adjoint pairs $(\Sigma^n, \Omega^n)$ for each $n \geq 0$. Each module $M \in \stmodsf \Lambda$ then admits a unit and counit map
\begin{align*}
	\eta_n& :M \to \Omega^n \Sigma^n M\\
	\varepsilon_n& : \Sigma^n \Omega^n M \to M. 
\end{align*}

To define $\dell \Lambda$, we first define an invariant $\dell S$ for simple modules $S \in \modsf \Lambda$ taking value in $\N \cup \{ \infty \}$. 

\begin{defthm}[{\cite[Def. 1.2, Thm. 1.10]{Ge20}}] The following are equivalent for $\dell S$: 
	\begin{enumerate}[a)] 
	\item $\dell S \leq n$. 
	\item $\Omega^n S$ is a stable retract of $\Omega^{n+1} N$ for some $N \in \modsf \Lambda$. 
	\item $\Omega^n S$ is a stable retract of $\Omega^{n+1} \Sigma^{n+1} \Omega^n S$. 
	\item The unit map $\eta_{n+1}: \Omega^n S \to \Omega^{n+1} \Sigma^{n+1} \Omega^n S$ is a split-monomorphism. 
\end{enumerate}
\end{defthm} 
\begin{rem} In this article we shall focus on condition b) as a mean of computing $\dell S$. However, the conditions c)-d) are of great practical and theoretical interest in general, and so we include them as part of the definition. 
\end{rem} 

We then define $\dell \Lambda$ by taking supremum over the simples $S$ in $\modsf \Lambda$. 
\begin{defn} The delooping level of $\Lambda$ is defined as $\dell \Lambda := \sup_S \dell S$. 
\end{defn}
Unpacking the definition, we see that $\dell \Lambda \leq n$ if and only if $\Omega^n S$ is a stable retract of $\Omega^{n+1}N$ for some $N \in \modsf \Lambda$ for each simple $S \in \modsf \Lambda$.

Finally, we note that $\Findim \Lambda^{op} \leq \dell \Lambda$ for any Artin algebra (\cite[Prop. 1.3]{Ge20}). To investigate whether this is an equivalence in our setting, we first find methods to calculate each side.  

\section{The finitistic dimension of radical square zero algebras} We can compute the finitistic dimensions of $\Lambda^{op}$ in terms of the projective dimensions of left simple modules. Define the invariant $s$ as 
\begin{align*}
	s = \sup \{ \pdim T \ | \ T \in \modsf \Lambda^{op} \textup{ simple with } \pdim T < \infty \}. 
\end{align*}

Mochizuki \cite{Mo65} showed the following (see also \cite[Section 3]{HZ95}). 
\begin{prop}[\cite{Mo65}] We have (in)equalities 
	\begin{align*}
		s \leq \findim \Lambda^{op} = \Findim \Lambda^{op} \leq s+1.
	\end{align*}
\end{prop}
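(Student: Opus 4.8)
The plan is to exploit the structure of radical square zero algebras, where every non-projective finitely presented module has a very constrained projective resolution. First I would observe that for such $\Lambda$, if $M$ is any module then $\Omega M$ is a semisimple module: indeed $\Omega M \subseteq {\sf r}P$ for a projective cover $P \twoheadrightarrow M$, and ${\sf r}P$ is semisimple since ${\sf r}^2 = 0$. Hence for $i \geq 1$ every syzygy $\Omega^i M$ is semisimple, so $\pdim M < \infty$ if and only if $\Omega M$ has finite projective dimension, which (writing $\Omega M$ as a direct sum of simples) happens if and only if each simple summand appearing has finite projective dimension. This immediately gives $\pdim M \leq 1 + \sup\{\pdim S : S \text{ simple}, \, \pdim S < \infty,\ S \text{ a summand of } \Omega M\}$, from which the bound $\Findim \Lambda^{op} \leq s+1$ will follow after taking suprema, and the bound $s \leq \findim \Lambda^{op}$ is trivial since simple modules with finite projective dimension are among the modules competing in the definition of $\findim$.

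The remaining point is the equality $\findim \Lambda^{op} = \Findim \Lambda^{op}$, i.e. that the big and little finitistic dimensions coincide. The key step here is to control arbitrary (not necessarily finitely presented) left $\Lambda^{op}$-modules $M$ with $\pdim M < \infty$. The same syzygy argument applies: $\Omega M$ embeds in ${\sf r}P$ for a (possibly infinitely generated) projective cover, and ${\sf r}P$ is a semisimple module even in the big module category because ${\sf r}^2 = 0$. A semisimple module of finite projective dimension over an Artin algebra is a (possibly infinite) direct sum of simple modules each of finite projective dimension; since there are only finitely many simples, and projective dimension commutes with arbitrary direct sums in this setting, $\pdim(\Omega M) \leq s$. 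Therefore $\pdim M \leq s+1$ for every big module of finite projective dimension, while the little finitistic dimension already satisfies $\findim \Lambda^{op} \geq s$, and we must still check $\findim \Lambda^{op} \geq s+1$ whenever $\Findim \Lambda^{op} = s+1$. For this last point one produces a \emph{finitely presented} witness: if some simple $T$ has $\pdim T = s$ and the bound $s+1$ is attained, one finds a finitely presented $M$ with $\Omega M$ containing $T$ as a summand and $\pdim M = s+1$; such an $M$ can be built from the relevant projective covers, which are finitely presented since $\Lambda$ is an Artin algebra.

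I would organize the write-up as: (1) syzygies are semisimple; (2) compute $\pdim$ of a module in terms of $\pdim$ of the simple summands of its first syzygy; (3) deduce the upper bound $s+1$ for $\Findim \Lambda^{op}$, valid for big modules; (4) deduce the lower bound $s$ for $\findim \Lambda^{op}$; (5) reconcile the two finitistic dimensions by exhibiting a finitely presented module realizing any value attained by a big module. The main obstacle I anticipate is step (5): transferring the "witness of maximal projective dimension" from the big module category down to a finitely presented module. The resolution is that the obstruction to finite projective dimension already lives at the level of the semisimple first syzygy and its finitely many simple summands, so one never needs more than a finitely generated projective cover to see it — but making this rigorous requires a short argument that $\pdim$ of an arbitrary direct sum of simples equals the supremum of the $\pdim$ of the summands, which uses that $\Lambda$ has only finitely many simples and standard properties of projective dimension under direct sums.
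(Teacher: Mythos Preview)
The paper does not actually prove this proposition: it is stated with a bare citation to Mochizuki \cite{Mo65} and no argument is given. So there is no ``paper's own proof'' to compare against; your plan is a genuine proof where the paper simply quotes the literature.

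Your plan is correct and is the standard argument. Steps (1)--(4) are clean: the observation that $\Omega M \subseteq {\sf r}P$ is semisimple, together with the fact that a (possibly infinite) direct sum of simples has projective dimension equal to the supremum over its summands, immediately yields $s \leq \findim \Lambda^{op} \leq \Findim \Lambda^{op} \leq s+1$.

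The only place that needs sharpening is step (5), and you already flag it as the main obstacle. Your description (``build $M$ from the relevant projective covers'') is too vague as written. The precise mechanism is this: if $\Findim \Lambda^{op} = s+1$, pick a big module $M_0$ with $\pdim M_0 = s+1$; then $\Omega M_0$ is semisimple and contains a simple summand $T$ with $\pdim T = s$. The crucial point is that this particular $T$ embeds in the projective $P$ covering $M_0$, so $T^* \neq 0$. Now take any nonzero (hence injective) map $\varphi: T \hookrightarrow {}_{\Lambda}\Lambda$ and set $N = {\rm coker}(\varphi)$; then $N$ is finitely presented, $\Omega N \cong T$ stably, and $\pdim N = s+1$, giving $\findim \Lambda^{op} = s+1$. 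Note that not every simple $T$ with $\pdim T = s$ need satisfy $T^* \neq 0$; it is the embedding $T \subseteq \Omega M_0 \subseteq P$ that guarantees it here. This is exactly the argument the paper gives in the lemma immediately following the proposition (the equivalence $\Findim \Lambda^{op} = s \iff$ every simple $T$ with $\pdim T = s$ has $T^* = 0$), so you have essentially reconstructed that step as well.
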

Since the little and big finitistic dimensions coincide in our situation, in the sequel we only focus on $\Findim \Lambda^{op}$. We can decide whether $\Findim \Lambda^{op} = s$ or $\Findim \Lambda^{op} = s+1$ as follows. 

\begin{lem} The following are equivalent: 
	\begin{enumerate}[i)]
	\item $\Findim \Lambda^{op} = s$.
	\item Every left simple $T$ with $\pdim T = s$ satisfies $T^* = 0$. 
\end{enumerate}
\end{lem}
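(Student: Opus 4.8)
The plan is to invoke Mochizuki's Proposition, which pins $\Findim \Lambda^{op}$ to one of the two values $s$ or $s+1$; the lemma then amounts to the claim that $\Findim \Lambda^{op} = s+1$ occurs precisely when there is a left simple $T$ with $\pdim T = s$ and $T^{*}\neq 0$. Here $T^{*} = \mathrm{Hom}_{\Lambda^{op}}(T,\Lambda^{op})$, and for a simple module $T$ one has $T^{*}\neq 0$ if and only if $T$ embeds into $\Lambda^{op}$, equivalently $T$ is a direct summand of $\mathrm{soc}\,\Lambda^{op}$. The key structural input, coming from ${\sf r}^{2}=0$, is that syzygies in $\modsf \Lambda^{op}$ are semisimple: for a projective cover $P\twoheadrightarrow N$ we have $\Omega N\subseteq \rad P = P{\sf r}$, which is annihilated by ${\sf r}$ and hence semisimple; and as a submodule of a semisimple module, $\Omega N$ is a direct summand of $\rad P$, so each of its simple summands is a summand of some $\rad Q_{i}$, hence of $\rad\Lambda^{op} = {\sf r}$. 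Combined with the decomposition $\mathrm{soc}\,\Lambda^{op} = {\sf r}\,\oplus\,(\text{the simple projective summands of }\Lambda^{op})$, this gives the observation I will use in both directions: a left simple $T$ occurs as a direct summand of some syzygy $\Omega N$ if and only if $T$ is a direct summand of ${\sf r}$, and when $T$ is non-projective this is equivalent to $T^{*}\neq 0$.

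For $\neg$(ii) $\Rightarrow \neg$(i): let $T_{j}$ be a left simple with $\pdim T_{j}=s$ and $T_{j}^{*}\neq 0$, so $T_{j}$ embeds into some indecomposable projective $Q_{i}$ with image in $\mathrm{soc}\,Q_{i}$. As $\pdim T_{j}=s\geq 1$, the simple $T_{j}$ is non-projective, so $Q_{i}$ is non-simple; since $\mathrm{soc}\,Q_{i}=\rad Q_{i}$ for every non-simple indecomposable projective (as ${\sf r}^{2}=0$), $T_{j}$ is a direct summand of $\rad Q_{i}$. Set $M:=Q_{i}/T_{j}$. Since $T_{j}\subseteq \rad Q_{i}$, the epimorphism $Q_{i}\twoheadrightarrow M$ is a projective cover, so $\Omega M\cong T_{j}$; and $M$ is non-projective, for otherwise $T_{j}$ would be a nonzero direct summand of the indecomposable non-simple $Q_{i}$, which is absurd. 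Hence $\pdim M = \pdim\Omega M + 1 = s+1$, so $\Findim\Lambda^{op}\geq s+1$, and Mochizuki's upper bound forces $\Findim\Lambda^{op}=s+1\neq s$.

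For $\neg$(i) $\Rightarrow \neg$(ii): if $\Findim\Lambda^{op}\neq s$, then by Mochizuki's Proposition there is a module $M$ with $\pdim M = s+1$, necessarily non-projective since $s+1\geq 1$. Its syzygy $\Omega M$ is semisimple with $\pdim\Omega M = s$, so --- the projective dimension of a semisimple module being the maximum of those of its simple summands --- some simple summand $T$ of $\Omega M$ has $\pdim T = s$. By the structural observation $T$ is a summand of ${\sf r}$, hence embeds into $\Lambda^{op}$, so $T^{*}\neq 0$ and (ii) fails.

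The step I expect to be the main obstacle is the bookkeeping behind this ``structural observation'': one must carefully separate the simple projective summands of $\mathrm{soc}\,\Lambda^{op}$ from the part ${\sf r}$ of the socle in order to pass legitimately from ``$T_{j}^{*}\neq 0$'' to ``$T_{j}$ is a direct summand of some $\rad Q_{i}$'', and the degenerate case $s=0$ --- in particular the semisimple case --- has to be handled separately, since there a simple projective $T_{j}$ always has $T_{j}^{*}\neq 0$ yet need not lie inside any $\rad Q_{i}$. One should also remember that $\pdim M = \pdim\Omega M + 1$ holds only for non-projective $M$, so the non-projectivity of $Q_{i}/T_{j}$ and of the witness module in the converse direction genuinely needs checking --- though it is immediate in both cases. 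Once these points are settled, both implications are short.
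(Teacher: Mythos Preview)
Your proof is correct and follows essentially the same route as the paper's: in one direction you embed a simple $T$ with $\pdim T = s$ and $T^{*}\neq 0$ into a projective and pass to the cokernel to produce a module of projective dimension $s+1$; in the other you take the semisimple syzygy of a module of projective dimension $s+1$ and extract a simple summand witnessing the failure of (ii). Your write-up is more careful than the paper's in verifying that the relevant epimorphism is a projective cover and that the cokernel is non-projective, and you rightly flag the degenerate case $s=0$ (which the paper silently ignores); otherwise the arguments coincide.
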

\begin{proof}
$i) \implies ii)$. Let $T$ be a simple module with $\pdim T = s$. If $T^* \neq 0$, then there exists a non-zero morphism $\varphi: T \to$ $\! _\Lambda \Lambda$ which is then an embedding as $T$ is simple. Letting ${\rm coker}(\varphi) = N$ gives $\Omega N = T$ and $\pdim N = s+1$, contradicting i). 

$ii) \implies i)$. Assume that $\Findim \Lambda^{op} \neq s$, so that $\Findim \Lambda^{op} = s+1$. Let $M$ be a left module with $\pdim M = s+1$. Let $\pi: P \to M$ be its projective cover. Then $\Omega M = {\rm ker}(\pi) \subseteq {\sf r}P$ is annihilated by ${\sf r}$ (as ${\sf r}^2 = 0$) and so is semisimple. Since $\pdim \Omega M = s$, there exists a simple summand $T$ of $\Omega M$ with $\pdim T = s$. But then $T \subseteq P$ implies $T^* \neq 0$, contradicting ii). 
\end{proof}

\section{The delooping level of radical square zero algebras} 
Next, we consider the computation of $\dell \Lambda$; we will obtain a combinatorial formula for this invariant in terms of the valued quiver of $\Lambda$. We will then be able to match the combinatorial formula to the description of $\Findim \Lambda^{op}$ above. 

Recall from \cite[III.1]{ARS95} that the valued quiver $\Gamma = (\Gamma_0, \Gamma_1)$ of $\Lambda$ consists of a set of vertices $\Gamma_0 = \{ 1, 2, \dots, n \}$, and $\Gamma_1$ consists of a set of arrows between vertices $(i, j)$, precisely one arrow $\alpha: j \to i$ for each pair such that ${\rm Ext}^1_\Lambda(S_i, S_j) \neq 0$. The endomorphism ring of each simple $\Delta_i = {\rm End}_\Lambda(S_i)$ is a division algebra, which we use to equip each arrow $j \to i$ with a valuation, namely a pair of integers $(m_i, m_j)$ defined as 
\begin{align*}
	&m_i = \dim_{\Delta_i}{\rm Ext}^1_\Lambda(S_i, S_j) \\
	&m_j = \dim_{\Delta_j}{\rm Ext}^1_\Lambda(S_i, S_j).
\end{align*} 
Note that using the duality $D: \modsf \Lambda \to \modsf \Lambda^{op}$, one sees that the valued quiver of $\Lambda^{op}$ is the opposite quiver $\Gamma^{op}$ obtained by reversing arrows and valuations $(m_i, m_j) \leftrightarrow (m_j, m_i)$. When $\Lambda = kQ/{\sf r}^2$ is a quiver path algebra over a field $k$, the valued quiver $\Gamma$ has the same vertices as $Q$ and has, for every pair of vertices $(i, j)$ with $m$ arrows $j \to i$, a single arrow with valuation $(m, m)$. 

In general we can use the valuation quiver to encode the resolutions of simple modules. 
\begin{prop}[{\cite[Prop. 1.15 a)]{ARS95}}] Let $\Lambda$ be an Artin algebra with radical ${\sf r}$. Let $P_i \to S_i$ and $P_j \to S_j$ be the projective covers of the simple modules $S_i$ and $S_j$ respectively. Then the following numbers are the same.  \begin{enumerate}[i)] 
		\item $m_j = \dim_{\Delta_j}{\rm Ext}^1_\Lambda(S_i, S_j)$. 
		\item The multiplicity of the simple module $S_j$ as a summand of $P_i{\sf r}/P_i{\sf r}^2$. 
\end{enumerate}
\end{prop}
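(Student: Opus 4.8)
The plan is to compute ${\rm Ext}^1_\Lambda(S_i, S_j)$ directly from the start of a projective resolution of $S_i$ and to identify it with a space of homomorphisms that visibly detects the multiplicity of $S_j$ in the first radical layer $P_i{\sf r}/P_i{\sf r}^2$. Throughout one works with right $\Lambda$-modules, so that the radical of $P_i$ is $P_i{\sf r}$ and the radical of $P_i{\sf r}$ is $(P_i{\sf r}){\sf r} = P_i{\sf r}^2$.

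First I would record the short exact sequence $0 \to P_i{\sf r} \to P_i \to S_i \to 0$, in which $P_i{\sf r}$ is the first syzygy $\Omega S_i$ because $P_i \to S_i$ is a projective cover. Applying ${\rm Hom}_\Lambda(-, S_j)$ and using ${\rm Ext}^1_\Lambda(P_i, S_j) = 0$ gives an exact sequence
\[ {\rm Hom}_\Lambda(P_i, S_j) \xrightarrow{\ \rho\ } {\rm Hom}_\Lambda(P_i{\sf r}, S_j) \longrightarrow {\rm Ext}^1_\Lambda(S_i, S_j) \longrightarrow 0. \]
The map $\rho$, restriction along $P_i{\sf r} \hookrightarrow P_i$, is zero: any $\Lambda$-homomorphism $P_i \to S_j$ annihilates $P_i{\sf r}$, since $S_j{\sf r} = 0$. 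Hence ${\rm Ext}^1_\Lambda(S_i, S_j) \cong {\rm Hom}_\Lambda(P_i{\sf r}, S_j)$, an isomorphism of left $\Delta_j$-modules for the action of $\Delta_j = {\rm End}_\Lambda(S_j)$ by postcomposition.

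Next I would simplify the right-hand side. Since $S_j$ is semisimple, every homomorphism $P_i{\sf r} \to S_j$ annihilates the radical $P_i{\sf r}^2$ of $P_i{\sf r}$, so ${\rm Hom}_\Lambda(P_i{\sf r}, S_j) \cong {\rm Hom}_\Lambda(P_i{\sf r}/P_i{\sf r}^2, S_j)$, compatibly with the $\Delta_j$-action. Writing the semisimple module $P_i{\sf r}/P_i{\sf r}^2 \cong \bigoplus_{k=1}^n S_k^{\oplus c_k}$ with $c_k$ the multiplicity of $S_k$, Schur's lemma gives ${\rm Hom}_\Lambda(S_k, S_j) = 0$ for $k \neq j$ and ${\rm Hom}_\Lambda(S_j, S_j) = \Delta_j$, whence ${\rm Hom}_\Lambda(P_i{\sf r}/P_i{\sf r}^2, S_j) \cong \Delta_j^{\oplus c_j}$ as a left $\Delta_j$-module. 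Taking $\Delta_j$-dimensions, $\dim_{\Delta_j}{\rm Ext}^1_\Lambda(S_i, S_j) = c_j$, which is exactly the multiplicity of $S_j$ in $P_i{\sf r}/P_i{\sf r}^2$.

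There is no real obstacle here; the only points requiring attention are the module-side conventions noted above and the verification that each identification is a morphism of left $\Delta_j$-modules, which is what legitimises the closing dimension count. (Equivalently one may run the argument through a minimal projective resolution $\cdots \to P^1 \to P_i \to S_i \to 0$: minimality forces the differentials of the complex ${\rm Hom}_\Lambda(P^\bullet, S_j)$ to vanish, so ${\rm Ext}^1_\Lambda(S_i, S_j) = {\rm Hom}_\Lambda(P^1, S_j)$ with $P^1 = \bigoplus_k P_k^{\oplus c_k}$ the projective cover of $P_i{\sf r}$, and the same Schur computation finishes it.)
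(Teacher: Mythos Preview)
The paper does not supply its own proof of this proposition; it is quoted verbatim from \cite[Prop.~1.15~a)]{ARS95} and used as a black box. So there is nothing in the paper to compare your argument against.

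That said, your argument is correct and is essentially the standard one. The key points---that the restriction map ${\rm Hom}_\Lambda(P_i,S_j)\to{\rm Hom}_\Lambda(P_i{\sf r},S_j)$ vanishes because $S_j{\sf r}=0$, that homomorphisms into a simple factor through the top $P_i{\sf r}/P_i{\sf r}^2$, and that Schur's lemma then reads off the multiplicity as a $\Delta_j$-dimension---are all in order, and you have been careful about the side on which $\Delta_j$ acts. The parenthetical variant via the minimal projective resolution is equally valid and is in fact how the result is often phrased.
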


Since in our situation ${\sf r}^2 = 0$, this provides a description of the syzygies of simple modules. 

\begin{cor}\label{firstsyzygy} For each simple $S_i \in \modsf \Lambda$ we have a short exact sequence 
	\begin{align*}
		0 \to \bigoplus_{j \to i} S_j^{m_j} \to P_i \to S_i \to 0 
	\end{align*} 
	where the sum is over all arrows $j \to i$ in $\Gamma$. 
\end{cor}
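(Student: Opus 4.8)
The plan is to read this off directly from the projective cover of $S_i$ together with the preceding Proposition. First I would take the projective cover $\pi : P_i \twoheadrightarrow S_i$; since $S_i$ is simple and $P_i \to S_i$ is its projective cover, the kernel of $\pi$ is the radical submodule $P_i{\sf r}$, giving the short exact sequence
\begin{align*}
	0 \to P_i {\sf r} \to P_i \xrightarrow{\ \pi\ } S_i \to 0.
\end{align*}
It then remains only to identify $P_i{\sf r}$ with $\bigoplus_{j\to i} S_j^{m_j}$.

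Next, because ${\sf r}^2 = 0$ we have $P_i {\sf r}^2 = 0$, hence $P_i{\sf r} = P_i{\sf r}/P_i{\sf r}^2$, and this module is annihilated by ${\sf r}$, so it is semisimple. A semisimple module is determined up to isomorphism by the multiplicities with which the simples $S_j$ occur in it, and by the cited Proposition (\cite[Prop. 1.15 a)]{ARS95}) the multiplicity of $S_j$ as a summand of $P_i{\sf r}/P_i{\sf r}^2$ is exactly $m_j = \dim_{\Delta_j}{\rm Ext}^1_\Lambda(S_i, S_j)$. Therefore $P_i{\sf r} \cong \bigoplus_j S_j^{m_j}$, the sum ranging over all vertices $j$.

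Finally I would check that this indexing matches the statement: we have $m_j \neq 0$ precisely when ${\rm Ext}^1_\Lambda(S_i, S_j) \neq 0$, which by the definition of the valued quiver is precisely when there is an arrow $j \to i$ in $\Gamma$; for the remaining $j$ the term $S_j^{0}$ is zero and contributes nothing. Hence $\bigoplus_j S_j^{m_j} = \bigoplus_{j \to i} S_j^{m_j}$, as claimed. There is no genuine obstacle here — the result is an immediate consequence of the shape of projective covers in the radical-square-zero case and the quoted Proposition; the only point requiring a moment's care is the bookkeeping between ``vertices $j$ with $m_j \neq 0$'' and ``arrows $j \to i$ in $\Gamma$'', which is immediate from the definitions.
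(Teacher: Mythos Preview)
Your proposal is correct and follows the same approach as the paper's proof: take the projective cover $0 \to P_i{\sf r} \to P_i \to S_i \to 0$ and use ${\sf r}^2 = 0$ together with the cited Proposition to identify $P_i{\sf r} = P_i{\sf r}/P_i{\sf r}^2 \cong \bigoplus_{j \to i} S_j^{m_j}$. You have simply spelled out the bookkeeping (semisimplicity of $P_i{\sf r}$, matching nonzero $m_j$ to arrows $j \to i$) that the paper leaves implicit.
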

\begin{proof}
We have a short exact sequence
\begin{align*}
	0 \to P_i{\sf r} \to P_i \to S_i \to 0 
\end{align*}
and $P_i{\sf r} = \bigoplus_{j \to i} S_j^{m_j}$ by the last proposition since ${\sf r}^2 = 0$. 
\end{proof}

We now have a description of the first syzygies of $S_i$ as
\begin{align*}
	\Omega S_i = \bigoplus_{j \to i} S_j^{m_j}.	 
\end{align*}
Iterating Corollary \ref{firstsyzygy}, we obtain a description of the higher syzygies of simple modules. 

\begin{cor}\label{iteratedsyzygy} The higher syzygies of simples $S_i \in \modsf \Lambda$ are given by
	\begin{align*}
	\Omega^n S_i = \bigoplus_{j \to j_1 \to \dots \to j_{n-1} \to i} S_j^m
	\end{align*}
	where the sum is over all paths of length $n$ from $j$ to $i$ in $\Gamma$ and the multiplicity is $m := m_j m_{j_1} \dots m_{j_{n-1}}$. 
\end{cor}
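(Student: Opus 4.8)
The plan is to prove the formula by induction on $n$, using Corollary \ref{firstsyzygy} as the base case ($n = 1$) and as the inductive step. The key point is that, in a radical square zero algebra, every syzygy $\Omega^n S_i$ (for $n \geq 1$) is semisimple, because $\Omega M \subseteq {\sf r}P$ is annihilated by ${\sf r}$, and so the syzygy functor is additive on such modules: $\Omega\big(\bigoplus_k S_k^{a_k}\big) \cong \bigoplus_k (\Omega S_k)^{a_k}$ in $\stmodsf \Lambda$ (here I only need equality up to stable equivalence, but in fact it holds on the nose since the projective cover of a direct sum is the direct sum of projective covers).

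First I would record the additivity observation above. Then, assuming inductively that
\begin{align*}
	\Omega^n S_i = \bigoplus_{j \to j_1 \to \dots \to j_{n-1} \to i} S_j^{m_j m_{j_1} \cdots m_{j_{n-1}}},
\end{align*}
where the sum runs over all length-$n$ paths in $\Gamma$ ending at $i$, I would apply $\Omega$ to both sides, use additivity to pass $\Omega$ inside the direct sum, and then apply Corollary \ref{firstsyzygy} to each summand $S_j$ to get $\Omega S_j = \bigoplus_{j' \to j} S_{j'}^{m_{j'}}$. Substituting and collecting, a length-$n$ path $j \to j_1 \to \dots \to j_{n-1} \to i$ together with an arrow $j' \to j$ assembles precisely into a length-$(n{+}1)$ path $j' \to j \to j_1 \to \dots \to j_{n-1} \to i$, and the multiplicities multiply: $m_{j'} \cdot (m_j m_{j_1} \cdots m_{j_{n-1}})$, which is exactly the claimed multiplicity for the $(n{+}1)$-st syzygy. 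This closes the induction.

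The only thing requiring a little care — and the closest thing to an obstacle — is the bookkeeping of the index set: one must check that the reindexing "a length-$(n{+}1)$ path is the same thing as a length-$n$ path plus a prepended arrow" is a genuine bijection respecting multiplicities, so that no path is counted twice and none is omitted. This is routine. A secondary subtlety is that the isomorphism in the statement should be read in $\stmodsf \Lambda$ (equivalently, up to projective summands), since the literal projective cover sequence defining $\Omega$ is only well defined up to such summands; but as noted, for semisimple modules one in fact gets an honest isomorphism in $\modsf \Lambda$, so no essential difficulty arises.
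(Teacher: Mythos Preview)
Your proposal is correct and is exactly what the paper intends: the paper offers no formal proof beyond the phrase ``Iterating Corollary \ref{firstsyzygy}'', and your induction on $n$, using additivity of $\Omega$ over the semisimple syzygies and the obvious bijection between length-$(n{+}1)$ paths and length-$n$ paths with a prepended arrow, is precisely that iteration spelled out.
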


We next aim to describe the delooping level of the simples $S_i$. Recall that this is given by 
\begin{align*}
	\dell S_i = \inf \{ n \geq 0 \ | \ \Omega^n S_i \textup{ is a stable retract of } \Omega^{n+1} N \textup{ for some } N \in \modsf \Lambda \}.	
\end{align*}

We have that $\dell S_i = 0$ if and only if $S_i$ is a retract of a syzygy module, if and only if $S_i$ embeds in a projective. Since $S_i$ is simple, this is equivalent to $S_i^* \neq 0$. We can give a combinatorial characterisation of this last condition. Recall that a vertex $i \in \Gamma_0$ is a source if there are no arrows $j \to i$, and is a sink if there are no arrows $i \to j$. 

\begin{lem}\label{projectiveinjective} Let $i \in \Gamma_0$ be a vertex. 
	\begin{enumerate}[a)] 
	\item $i$ is a sink in $\Gamma$ if and only if $S_i$ is injective.
	\item $i$ is a source in $\Gamma$ if and only if $S_i$ is projective. 
\end{enumerate}
\end{lem}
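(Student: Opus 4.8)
The plan is to prove both parts by unwinding what the (co)syzygy structure from Corollary \ref{firstsyzygy} says about the dual module $S_i^*$ and the injective cosyzygy, then matching this to the combinatorial condition on $i$. The two parts are dual to each other under $D: \modsf \Lambda \to \modsf \Lambda^{op}$, which sends the valued quiver $\Gamma$ to $\Gamma^{op}$ (reversing arrows), so it suffices to prove one of them carefully and deduce the other by duality; I will do part b).

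For part b), I would argue as follows. The simple module $S_i$ is projective if and only if its projective cover $P_i \to S_i$ is an isomorphism, i.e. $P_i{\sf r} = 0$. By Corollary \ref{firstsyzygy} (or the preceding proposition), $P_i{\sf r} = \bigoplus_{j \to i} S_j^{m_j}$, where the sum is over arrows $j \to i$ in $\Gamma$ with $m_j = \dim_{\Delta_j}{\rm Ext}^1_\Lambda(S_i, S_j) \geq 1$. Hence $P_i{\sf r} = 0$ precisely when there are no arrows $j \to i$ in $\Gamma$, which is exactly the statement that $i$ is a source. So $S_i$ projective $\iff$ $i$ is a source in $\Gamma$.

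For part a), I would apply part b) to the opposite algebra $\Lambda^{op}$, which also has radical square zero. The simple right $\Lambda^{op}$-module corresponding to vertex $i$ is $D(S_i) = T_i$, and $S_i$ is injective as a right $\Lambda$-module if and only if $D(S_i) = T_i$ is projective as a right $\Lambda^{op}$-module, by the duality $D$. By part b) applied to $\Lambda^{op}$, $T_i$ is projective over $\Lambda^{op}$ if and only if $i$ is a source in the valued quiver of $\Lambda^{op}$, which is $\Gamma^{op}$; and $i$ is a source in $\Gamma^{op}$ if and only if $i$ is a sink in $\Gamma$. Combining, $S_i$ is injective $\iff$ $i$ is a sink in $\Gamma$.

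I do not anticipate a genuine obstacle here: the only point requiring a little care is making sure the duality $D$ identifies injective $\Lambda$-modules with projective $\Lambda^{op}$-modules and correctly transports the valued quiver, both of which are standard facts (\cite[II.3, III.1]{ARS95}) already invoked in the conventions and the discussion preceding the lemma. Alternatively, part a) could be proved directly by dualising Corollary \ref{firstsyzygy} to get a presentation of the injective envelope of $S_i$ and reading off when $S_i$ is already injective; but the duality route is cleaner and avoids re-deriving the cosyzygy structure.
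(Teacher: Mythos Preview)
Your proposal is correct and essentially matches the paper's argument: both proofs unwind the definition of the valued quiver and then invoke duality for the remaining part. The only cosmetic difference is that the paper proves a) first via the equivalence ``no arrow $i \to j$ $\iff$ ${\rm Ext}^1_\Lambda(S_j, S_i) = 0$ for all $j$ $\iff$ $S_i$ injective'' and declares b) dual, whereas you prove b) first via $P_i{\sf r} = 0$ using Corollary~\ref{firstsyzygy} and deduce a) by passing to $\Lambda^{op}$; the content is the same.
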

\begin{proof}
By definition $i$ is a sink if and only if there are no arrows $i \to j$ in $\Gamma$, if and only if ${\rm Ext}^1_\Lambda(S_j, S_i) = 0$ for all simples $S_j$, if and only if $S_i$ is injective. This proves a), and the proof of b) is dual.  
\end{proof}

\begin{lem}\label{LemmaA} The following are equivalent for a vertex $i \in \Gamma_0$: 
	\begin{enumerate}[a)] 
	\item $\dell S_i = 0$. 
	\item $S_i^* \neq 0$. 
	\item $i$ is a source or not a sink. 
\end{enumerate}
\end{lem}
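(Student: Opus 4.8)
The equivalence a) $\Leftrightarrow$ b) is already recorded in the discussion preceding the statement: $\dell S_i = 0$ means $S_i$ is a stable retract of a syzygy module, and since $S_i$ is indecomposable this amounts to $S_i$ being a genuine submodule of a projective, i.e. $S_i^* \neq 0$ (with the degenerate case $S_i$ projective falling under this, as then $\Omega S_i = 0$, $\dell S_i = 0$, and $S_i$ is a summand of $\Lambda_\Lambda$). So I would spend only a line on a) $\Leftrightarrow$ b) and concentrate on b) $\Leftrightarrow$ c), which I would prove by analysing how a simple module can embed into an indecomposable projective.

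For b) $\Rightarrow$ c): writing $\Lambda_\Lambda$ as a direct sum of indecomposable projectives, $S_i^* \neq 0$ forces an embedding $\iota \colon S_i \hookrightarrow P_k$ for some $k$. If $\iota(S_i) \subseteq P_k{\sf r}$, then since ${\sf r}^2 = 0$ the module $P_k{\sf r} = \bigoplus_{j \to k} S_j^{m_j}$ of Corollary \ref{firstsyzygy} is semisimple, so $S_i$ is isomorphic to one of its simple summands, giving an arrow $i \to k$ in $\Gamma$; in particular $i$ is not a sink. Otherwise $\iota(S_i) \cap P_k{\sf r}$ is a proper, hence zero, submodule of the simple module $\iota(S_i)$, so the composite $S_i \xrightarrow{\iota} P_k \twoheadrightarrow P_k/P_k{\sf r} = S_k$ is a split monomorphism of simples, hence an isomorphism; indecomposability of $P_k$ then forces $P_k \cong S_k \cong S_i$, so $S_i$ is projective and $i$ is a source by Lemma \ref{projectiveinjective} b). Conversely, for c) $\Rightarrow$ b): if $i$ is not a sink there is an arrow $i \to k$, so $S_i$ is a summand of $P_k{\sf r} \subseteq \Lambda_\Lambda$, whence $S_i^* \neq 0$; and if $i$ is a source then $S_i$ is projective by Lemma \ref{projectiveinjective} b), hence a nonzero summand of $\Lambda_\Lambda$, so again $S_i^* \neq 0$.

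The only step that needs genuine care is the ``transversal'' case $\iota(S_i) \not\subseteq P_k{\sf r}$: one must observe that it can only occur when $P_k$ is itself a simple projective module, which is where indecomposability of $P_k$ together with ${\sf r}^2 = 0$ both enter. Everything else is routine bookkeeping with Corollary \ref{firstsyzygy} and Lemma \ref{projectiveinjective}.
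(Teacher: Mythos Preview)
Your proof is correct. The handling of a) $\Leftrightarrow$ b) and of c) $\Rightarrow$ b) matches the paper essentially line for line. The direction b) $\Rightarrow$ c), however, is argued differently.

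The paper proceeds by contrapositive: assuming $i$ is a sink, Lemma \ref{projectiveinjective} a) makes $S_i$ injective, so any embedding $S_i \hookrightarrow \Lambda_\Lambda$ splits, forcing $S_i$ projective and hence $i$ a source. Your argument instead analyses directly where the image $\iota(S_i)$ sits inside an indecomposable projective $P_k$: either it lies in the radical $P_k{\sf r}$, which is semisimple, producing an arrow $i \to k$; or it meets the radical trivially, giving a direct sum decomposition $P_k = \iota(S_i) \oplus P_k{\sf r}$ and forcing $P_k{\sf r} = 0$ by indecomposability. Both arguments are short; the paper's is slicker in that it leverages the injectivity half of Lemma \ref{projectiveinjective}, while yours is more self-contained and never invokes part a) of that lemma. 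One minor inaccuracy in your closing commentary: the hypothesis ${\sf r}^2 = 0$ is actually used in the \emph{first} case (to make $P_k{\sf r}$ semisimple), not in the transversal case, where only indecomposability of $P_k$ is needed.
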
 
\begin{proof}
We saw the equivalence $a) \iff b)$ in the paragraph above Lemma \ref{projectiveinjective}. 

$b) \implies c)$: Let $\varphi: S_i \to \Lambda_\Lambda$ be a non-zero morphism. Since $S_i$ is simple, $\varphi$ is an embedding of $S_i$ into $\Lambda_\Lambda$. Now assume that $i$ is a sink. By the Lemma \ref{projectiveinjective}, the simple $S_i$ is injective, and therefore the short exact sequence
\begin{align*}
	0 \to S_i \xrightarrow{\varphi} \Lambda_\Lambda \to {\rm coker}\ \! \varphi \to 0
\end{align*}
splits. This shows that $S_i$ is projective and therefore $i$ is a source by Lemma \ref{projectiveinjective}. Hence we have shown that if $i$ is a sink, then it must be a source, and c) holds.  

$c) \implies b)$. If $i$ is a source then $S_i$ is projective and so $S_i^* \neq 0$. If $i$ is not a sink, then there is an arrow $i \to j$ in $\Gamma$ and therefore ${\rm Ext}^1_\Lambda(S_j, S_i) \neq 0$. By Corollary \ref{firstsyzygy}, the simple $S_i$ is a summand of $\Omega S_j$. Since $\Omega S_j$ embeds in a projective, so does $S_i$ and thus $S_i^* \neq 0$. Hence in either situation b) holds.  
\end{proof} 

For the higher case, we have the next proposition.
\begin{prop}\label{PropB} The following are equivalent for the vertex $i \in \Gamma_0$: 
	\begin{enumerate}[a)]
	\item $\dell S_i \leq n$. 
	\item For every path of length $n$ 
		$$j \to j_1 \to \dots \to j_{n-1} \to i$$
		either $j$ is a source, or there exists a path of length $n$ 
		$$j \to j'_1 \to \dots \to j'_{n-1} \to k$$ 
		with $k$ not a sink. 
\end{enumerate}
\end{prop}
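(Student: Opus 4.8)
The plan is to characterise $\dell S_i \leq n$ using condition b) from the Definition-Theorem, namely that $\Omega^n S_i$ is a stable retract of $\Omega^{n+1} N$ for some $N \in \modsf \Lambda$, and then to unwind this using the explicit description of syzygies of simple modules in Corollary~\ref{iteratedsyzygy}. Since ${\sf r}^2 = 0$, every higher syzygy $\Omega^m X$ for $m \geq 1$ is semisimple: it is a submodule of ${\sf r}P$ for a projective $P$, hence annihilated by ${\sf r}$. Thus for $m \geq 1$ stable retracts among syzygies are governed by honest direct-sum decompositions of semisimple modules, and I expect $\Omega^n S_i$ to be a stable retract of $\Omega^{n+1} N$ if and only if (after discarding any projective simple summands, which are exactly the $S_j$ with $j$ a source by Lemma~\ref{projectiveinjective}) every simple summand of $\Omega^n S_i$ occurs as a summand of $\Omega^{n+1} N$. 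Reducing further, it suffices to test $N$ ranging over the simples $S_k$, and $\Omega^{n+1} S_k = \bigoplus S_j^m$ over paths of length $n+1$ ending at $k$; a path of length $n+1$ ending at $k$ is a path of length $n$ from some $j$ to some $j_{n-1}'$ followed by an arrow $j_{n-1}' \to k$, and such an arrow exists (for some $k$) precisely when $j_{n-1}'$ is not a sink.

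Concretely, here is the order of the argument. First, I would observe that the simple summands appearing in $\Omega^n S_i$ are exactly those $S_j$ such that there is a path of length $n$ from $j$ to $i$ in $\Gamma$; call this set of vertices $P_n(i)$. Among these, the ones that are projective (equivalently, $j$ a source, by Lemma~\ref{projectiveinjective}) contribute projective direct summands that are invisible in $\stmodsf \Lambda$, so they impose no constraint. For the non-source $j \in P_n(i)$, I would show that $\Omega^n S_i$ being a stable retract of some $\Omega^{n+1} N$ forces each such $S_j$ to be a (non-projective) summand of some $\Omega^{n+1} N$. Second, I would show $S_j$ is a summand of $\Omega^{n+1} N$ for some $N$ if and only if $S_j$ is a summand of $\Omega^{n+1} S_k$ for some simple $S_k$ (pass to a simple summand of $N$ in the top, or use that syzygies commute with direct sums), if and only if there is a path $j \to j_1' \to \cdots \to j_{n-1}' \to j_n' = ?$ of length $n$ together with an arrow out of its endpoint — i.e. a path of length $n$ starting at $j$ whose endpoint is not a sink. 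Matching this to condition b) of the Proposition (with the indexing $j \to j_1' \to \dots \to j_{n-1}' \to k$, so that the length-$n$ path from $j$ has penultimate-plus-one vertex $k$ not a sink) then gives the equivalence, and taking the conjunction over all $j \in P_n(i)$ yields exactly statement b).

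The main obstacle I anticipate is the careful handling of \emph{stable} retracts versus ordinary retracts in the first reduction. A stable retract of $\Omega^{n+1} N$ is, by the conventions recalled in the introduction, an ordinary retract of $\Omega^{n+1} N \oplus Q$ for some projective $Q$; since $\Omega^{n+1} N$ is semisimple one must argue that the projective summands of $\Omega^n S_i$ (namely the simples $S_j$ with $j$ a source) are precisely the slack that $Q$ absorbs, and that the remaining non-projective semisimple part of $\Omega^n S_i$ must genuinely embed as a summand of $\Omega^{n+1} N$. Because $\Omega^n S_i$ is semisimple, decomposition is controlled by Jordan--H\"older multiplicities over the division rings $\Delta_j$, and I should be slightly careful that a retract of a semisimple module is a sub-sum of it with at most the same multiplicity of each simple — this is routine but is where the valuations $m_j$ could in principle enter, and I want to confirm they do not obstruct (they scale multiplicities uniformly, so the mere presence or absence of a summand, which is all condition b) records, is unaffected). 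The second obstacle, comparatively minor, is the bookkeeping that realising $S_j$ as a summand of $\Omega^{n+1} N$ can always be reduced to $N$ simple and then to the existence of a single length-$(n+1)$ path, which is immediate from Corollary~\ref{iteratedsyzygy} once one notes a length-$(n+1)$ path ending at $k$ decomposes as a length-$n$ path followed by a terminal arrow, the terminal arrow existing iff the length-$n$ path does not end at a sink.
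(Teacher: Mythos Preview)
Your approach is correct and mirrors the paper's: both decompose $\Omega^n S_i$ and $\Omega^{n+1} N$ via Corollary~\ref{iteratedsyzygy}, absorb projective simple summands (the $S_j$ with $j$ a source) into the ``stable'' part of the retract, and then match the remaining non-projective summands by Krull--Schmidt to paths of length $n$ ending at non-sinks. The only place to tighten is your justification for reducing to $N$ simple in the forward direction: ``pass to a simple summand of $N$ in the top'' does not work as stated since $N$ need not be semisimple and syzygies do not respect passage to the top; what the paper does (and what your first paragraph already sets up) is decompose $\Omega N$, which \emph{is} semisimple since ${\sf r}^2 = 0$, observe that each summand $S_k$ of $\Omega N$ embeds in a projective so $S_k^* \neq 0$, and for $n \geq 1$ discard the projective $S_k$ to leave precisely those $k$ that are not sinks.
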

\begin{proof} Lemma \ref{LemmaA} already covers the case $n = 0$ and so we may assume that $n \geq 1$. 

$a) \implies b)$. Assuming $\dell S_i \leq n$, then $\Omega^n S_i$ is a stable retract of $\Omega^{n+1} N$ for some $N \in \modsf \Lambda$. Since ${\sf r}^2 = 0$, the first syzygy $\Omega N$ is semisimple, say with decomposition $\Omega N = \bigoplus_{k} S_k$. Each such simple $S_k$ then satisfies $S_k^* \neq 0$, and so $k$ is either a source (if $S_k$ is projective) or not a sink. We then have 
\begin{align*} 
	\Omega^{n+1} N = \bigoplus_{k} \Omega^n S_k. 
\end{align*}
Moreover as $n \geq 1$, we may ignore those $k$ in the decomposition above with $S_k$ projective and therefore assume that for each $S_k$, the vertex $k$ is not a sink. Expanding this out using Corollary \ref{iteratedsyzygy}, we obtain a decomposition
\begin{align*}
	\Omega^{n+1} N = \bigoplus_k \bigoplus_{j' \to j'_1 \to \dots \to j'_{n-1} \to k} S_{j'}^{m'}
\end{align*} 
running over all paths of length $n$ in $\Gamma$, and with $m'$ the multiplicity. Similarly we have 
\begin{align*}
	\Omega^n S_i = \bigoplus_{j \to j_1 \to \dots \to j_{n-1} \to i} S_j^m.	
\end{align*}
Since $\Omega^n S_i$ is a stable retract of $\Omega^{n+1} N$, there exists a projective $P$ such that $\Omega^n S_i$ is a module retract of $P \oplus \Omega^{n+1} N$. By the Krull-Schmidt theorem, it follows that every summand $S_j$ of $\Omega^n S_i$ is either projective, in which case $j$ is a source, or occurs as $S_j = S_{j'}$ in the decomposition of $\Omega^{n+1}N$ above, in which case there is a path
\begin{align*}
	j = j' \to j'_{1} \to \dots \to j'_{n-1} \to k	
\end{align*}
with $k$ not a sink. Hence condition b) holds. 

$b) \implies a)$. We run the argument in reverse. For each simple summand $S_j$ of $\Omega^n S_i$, we see that either $j$ is a source, or there is a path of length $n$ in $\Gamma$ 
\begin{align*} 
	j \to j'_{1} \to \dots \to j'_{n-1} \to k 	
\end{align*}
with $k$ not a sink. In the first case $S_j$ is projective, and in the second case $S_j$ is a summand of $\Omega^n S_k$ with $S_k$ a simple satisfying $S_k^* \neq 0$. But then there is an embedding $\varphi_k: S_k \hookrightarrow \Lambda_\Lambda$, and letting $N_k = {\rm coker}\ \! \varphi_k$, we obtain $S_k = \Omega N_k$.

Letting $P = \bigoplus_j S_j$ be the sum of those $S_j$ which were projective (in the first case) and $N = \bigoplus_k N_k$ (from the second case), we obtain that $\Omega^n S_i$ is a retract of $P \oplus \Omega^{n+1} N$. This proves that $\dell S_i \leq n$ and condition a) holds. 
\end{proof}

The previous proposition gives a combinatorial formula for $\dell S_i$ for each simple $S_i$, and therefore also a formula for $\dell \Lambda = \sup_i \dell S_i$. 

\section{Proof of the main theorem} 
To compare $\dell \Lambda$ to $\Findim \Lambda^{op}$, recall that we have defined the invariant
\begin{align*} 
s = \sup \{ \pdim T \ | \ T \in \modsf \Lambda^{op} \textup{ simple with } \pdim T < \infty \} 
\end{align*}
and that we always have $s \leq \Findim \Lambda^{op} \leq s+1$. We can recast the value $s$ combinatorially in terms of the valued quiver of $\Lambda$ as follows. First, given a vertex $j \in \Gamma_0$, denote by $\downarrow{j} \subseteq \Gamma$ the full subquiver consisting of vertices $i$ reachable by a path $j \to \dots \to i$. When $\downarrow{j} \subseteq \Gamma$ is a subquiver without oriented cycle, we denote by $\ell(\downarrow{j})$ the length of the longest path in it starting from $j$. 

\begin{lem}\label{LemmaC} We have $s = \sup \{ \ell(\downarrow{j}) \ | \ \textup{the subquiver} \downarrow{j} \subseteq \Gamma \textup{ has no oriented cycle} \}$. 
\end{lem}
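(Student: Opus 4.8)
The plan is to translate the homological invariant $s$ into a purely combinatorial statement about the valued quiver $\Gamma$ of $\Lambda$, using the fact that $s$ is measured by finite projective dimensions of \emph{left} simple modules, which (via the duality $D$) correspond to simple right $\Lambda^{op}$-modules, whose syzygies over $\Lambda^{op}$ are governed by the opposite quiver $\Gamma^{op}$. Concretely, by Corollary \ref{iteratedsyzygy} applied to $\Lambda^{op}$, the higher syzygies of a simple left module $T_j$ are direct sums of simples indexed by paths of length $n$ in $\Gamma^{op}$ ending at $j$, i.e.\ by paths $j \to \dots \to i$ in $\Gamma$. So $\pdim T_j$ is controlled by the downward-reachable subquiver $\downarrow{j}$: the syzygies $\Omega^n T_j$ are indexed by length-$n$ paths in $\downarrow{j}$ out of $j$.

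The key observation is the standard fact that over a radical-square-zero algebra, a simple module has finite projective dimension if and only if the corresponding vertex lies on no oriented cycle and, more precisely, that $\pdim T_j < \infty$ exactly when $\downarrow{j}$ contains no oriented cycle, in which case $\pdim T_j = \ell(\downarrow{j})$. I would prove this in two directions. First, if $\downarrow{j}$ has an oriented cycle, then iterating Corollary \ref{iteratedsyzygy} (over $\Lambda^{op}$) produces, for infinitely many $n$, a nonzero syzygy $\Omega^n T_j$ containing a simple summand $T_k$ with $k$ on the cycle; since that $T_k$ is non-projective (its vertex is not a source in $\Gamma^{op}$, as it has an incoming arrow along the cycle), $\Omega^n T_j \neq 0$ for all $n$ and hence $\pdim T_j = \infty$. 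Second, if $\downarrow{j}$ has no oriented cycle, then $\downarrow{j}$ is finite and the longest path out of $j$ has some length $\ell = \ell(\downarrow{j})$; then $\Omega^{\ell} T_j$ is a sum of simples $T_i$ with $i$ a sink of $\downarrow{j}$ — but a sink of $\downarrow{j}$ need not be a sink of $\Gamma$, so one must check that such $T_i$ are projective as $\Lambda^{op}$-modules, i.e.\ that $i$ is a source in $\Gamma^{op}$, equivalently a sink in $\Gamma$; here one uses that any arrow out of $i$ in $\Gamma$ would stay inside $\downarrow{j}$, contradicting maximality of the path. Hence $\Omega^{\ell} T_j$ is projective and $\Omega^{\ell-1} T_j$ is not, giving $\pdim T_j = \ell$.

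Taking the supremum over all simple left modules $T_j$ with $\pdim T_j < \infty$ — equivalently, by the above, over all $j$ with $\downarrow{j}$ acyclic — yields exactly $s = \sup\{\ell(\downarrow{j}) \mid \downarrow{j} \subseteq \Gamma \text{ has no oriented cycle}\}$, which is the claim. One small point to dispatch: if every vertex lies on some oriented cycle (so no $T_j$ has finite projective dimension and the index set on the left is empty), both sides are the supremum of the empty set, which is handled by the convention $\sup \emptyset = 0$ consistent with $\findim \Lambda^{op} \ge 0$; I would remark on this to be safe, though it is harmless.

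The main obstacle is the careful bookkeeping in the direction $\downarrow{j}$ acyclic $\implies \pdim T_j = \ell(\downarrow{j})$: one must argue both that $\Omega^{\ell} T_j$ is \emph{actually projective} (not merely a sum of simples whose vertices are sinks of $\downarrow{j}$ — the subtlety being sink-of-subquiver versus sink-of-$\Gamma$) and that $\Omega^{\ell-1}T_j$ is \emph{not} projective, for which one needs a length-$(\ell-1)$ path out of $j$ whose endpoint is not a sink of $\Gamma$, guaranteed because it extends to a length-$\ell$ path. Translating all arrow-direction conditions correctly between $\Gamma$ and $\Gamma^{op}$ is where errors are most likely, so I would state once and for all that "projective over $\Lambda^{op}$" $\Leftrightarrow$ "source in $\Gamma^{op}$" $\Leftrightarrow$ "sink in $\Gamma$" (by Lemma \ref{projectiveinjective} applied to $\Lambda^{op}$) and use that dictionary throughout.
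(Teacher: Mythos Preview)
Your proposal is correct and follows essentially the same route as the paper: both apply Corollary \ref{iteratedsyzygy} over $\Lambda^{op}$ to identify the simple summands of $\Omega^n T_j$ with length-$n$ paths in $\Gamma$ out of $j$, and then conclude that $\pdim T_j < \infty$ iff $\downarrow{j}$ is acyclic, with $\pdim T_j = \ell(\downarrow{j})$ in that case. You in fact supply more detail than the paper does---the paper asserts the last equivalence in a single sentence, whereas you carefully check that sinks of $\downarrow{j}$ are sinks of $\Gamma$ and that $\Omega^{\ell-1}T_j$ is non-projective---so the argument is at least as complete.
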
 
\begin{proof}
	To each vertex $j \in \Gamma_0$ we let $T_j \in \modsf \Lambda^{op}$ be the associated left simple module. Applying Corollary \ref{iteratedsyzygy} to the simple $T_j$ we see that simple summands of $n$-th syzygies of $T_j$ are given by 
	\begin{align*}
		\Omega^n T_j = \bigoplus_{i \to i_1 \to \dots \to i_{n-1} \to j} T_i^m. 
	\end{align*}
	Hence the simples $T_i$ occuring as summands of syzygies of $T_j$ are those whose vertex $i$ can reach $j$ by a path in the valuation quiver $\Gamma^{op}$ of $\Lambda^{op}$. Since the latter reverses orientations, these are precisely the vertices $i \in \downarrow{j} \subseteq \Gamma$. 

	The above description of the syzygies then shows that $\pdim T_j < \infty$ if and only $\downarrow{j}$ has no oriented cycle, in which case $\pdim T_j = \ell(\downarrow{j})$. The formula then holds. 
\end{proof}

We can now refine the inequality $s \leq \Findim \Lambda^{op} \leq s+1$. 
\begin{prop} We have $s \leq \Findim \Lambda^{op} \leq \dell \Lambda \leq s+1$. 
\end{prop}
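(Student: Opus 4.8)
The plan is to prove the two outer inequalities and the middle one separately. The inequality $s \leq \Findim \Lambda^{op}$ is immediate from the definitions of $s$ and $\Findim$: any simple $T$ with $\pdim T < \infty$ is a finitely presented (hence arbitrary) module of finite projective dimension, so its projective dimension is bounded by $\Findim \Lambda^{op}$, and taking the supremum over such $T$ gives $s \leq \Findim \Lambda^{op}$. The inequality $\Findim \Lambda^{op} \leq \dell \Lambda$ holds for any Artin algebra by \cite[Prop. 1.3]{Ge20}, which is recalled at the end of Section 1. So the only content is the rightmost inequality $\dell \Lambda \leq s+1$.

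For $\dell \Lambda \leq s+1$, I would show that $\dell S_i \leq s+1$ for every simple $S_i \in \modsf \Lambda$, using the combinatorial criterion of Proposition \ref{PropB} with $n = s+1$. So fix a path of length $s+1$ in $\Gamma$, say $j \to j_1 \to \dots \to j_s \to i$, and I must show that either $j$ is a source, or there is a path of length $s+1$ starting at $j$ and ending at a vertex that is not a sink. Consider the subquiver $\downarrow{j} \subseteq \Gamma$ of all vertices reachable from $j$. If $\downarrow{j}$ contains an oriented cycle, then starting from $j$ one can find arbitrarily long paths, in particular a path of length $s+1$ from $j$ to some vertex $k$ lying on a cycle; any vertex on an oriented cycle has an outgoing arrow, so $k$ is not a sink, and we are done. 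If instead $\downarrow{j}$ has no oriented cycle, then by Lemma \ref{LemmaC} its longest path from $j$ has length $\ell(\downarrow{j}) \leq s$. But the path $j \to j_1 \to \dots \to j_s \to i$ has length $s+1 > s$ and lies entirely in $\downarrow{j}$, a contradiction — so this case cannot occur, and there is nothing to prove. Hence in all cases condition b) of Proposition \ref{PropB} holds with $n = s+1$, giving $\dell S_i \leq s+1$, and taking the supremum over $i$ yields $\dell \Lambda \leq s+1$.

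The main obstacle, such as it is, lies in the cycle case: one must be careful that a vertex on an oriented cycle genuinely fails to be a sink, and that from $j$ one can reach such a vertex by a path of exactly length $s+1$ (not merely of length at least $s+1$) — this is fine since once one reaches a cycle one can pad the path by going around the cycle, and one can always truncate to the exact required length because $s+1 \geq 1$ and the path $j \to j_1 \to \dots$ already witnesses that $j$ has an outgoing arrow. The other subtlety is bookkeeping the edge case $j$ a source, which is already allowed as the first alternative in Proposition \ref{PropB} b), so no separate argument is needed there. Everything else is a direct assembly of results established earlier in the paper: \cite[Prop. 1.3]{Ge20} for the middle inequality, the definition of $s$ for the left inequality, and Proposition \ref{PropB} together with Lemma \ref{LemmaC} for the right inequality.

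\begin{proof}
The inequality $s \leq \Findim \Lambda^{op}$ is immediate: any simple $T \in \modsf \Lambda^{op}$ with $\pdim T < \infty$ has $\pdim T \leq \Findim \Lambda^{op}$, and taking the supremum over such $T$ gives the claim. The inequality $\Findim \Lambda^{op} \leq \dell \Lambda$ holds for any Artin algebra by \cite[Prop. 1.3]{Ge20}. It remains to prove $\dell \Lambda \leq s+1$, for which it suffices to show $\dell S_i \leq s+1$ for each simple $S_i \in \modsf \Lambda$. We verify condition b) of Proposition \ref{PropB} with $n = s+1$.

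Fix a path of length $s+1$ in $\Gamma$, say
\begin{align*}
	j \to j_1 \to \dots \to j_s \to i.
\end{align*}
If $j$ is a source, we are done. Otherwise, consider the subquiver $\downarrow{j} \subseteq \Gamma$. If $\downarrow{j}$ has no oriented cycle, then by Lemma \ref{LemmaC} the longest path in $\downarrow{j}$ starting at $j$ has length $\ell(\downarrow{j}) \leq s$; but the displayed path above has length $s+1$ and lies in $\downarrow{j}$, a contradiction. Hence $\downarrow{j}$ contains an oriented cycle. Since $j$ is not a source but has the outgoing arrow $j \to j_1$, we may travel from $j$ into a cycle and go around it as needed to produce a path of length exactly $s+1$
\begin{align*}
	j \to j'_1 \to \dots \to j'_{s} \to k
\end{align*}
with $k$ a vertex lying on an oriented cycle. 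Such a $k$ has an outgoing arrow and is therefore not a sink. Thus condition b) of Proposition \ref{PropB} holds with $n = s+1$, so $\dell S_i \leq s+1$. Taking the supremum over all simples gives $\dell \Lambda \leq s+1$.
\end{proof}
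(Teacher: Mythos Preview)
Your proof is correct and follows essentially the same route as the paper's: both hinge on Proposition~\ref{PropB} and Lemma~\ref{LemmaC}, with the paper phrasing the argument by contradiction (assuming $\dell S_i > s+1$ and deriving $\ell(\downarrow{j}) = s+1 > s$) while you argue directly. The logical content is identical.

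One small imprecision in your formal proof: you assert that the endpoint $k$ of your length-$(s+1)$ path \emph{lies on an oriented cycle}, but if the nearest cycle in $\downarrow{j}$ is more than $s+1$ steps from $j$, no path of length exactly $s+1$ reaches it. The clean fix---which you already gesture at in your plan---is to take any path of length $s+2$ from $j$ (which exists since $\downarrow{j}$ contains a cycle) and truncate it to length $s+1$; the resulting endpoint has an outgoing arrow and hence is not a sink, which is all that condition~b) requires.
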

\begin{proof} We always have $\Findim \Lambda^{op} \leq \dell \Lambda$ and so it suffices to show $\dell \Lambda \leq s+1$. 

	By contradiction, assume that $\dell \Lambda > s+1$, so that $\dell S_i > s+1$ for some right simple $S_i$. By Proposition \ref{PropB}, we conclude that there exists a path of length $s+1$ in $\Gamma$ 
	\begin{align*}
		j \to j_1 \to \dots \to j_{s-1} \to j_s \to i	
	\end{align*}
	such that $j$ is not a source, and such that every path of length $s+1$ 
	\begin{align*} 
		j \to j'_1 \to \dots \to j'_{s-1} \to j'_s \to k
	\end{align*}
	ends at a sink vertex $k$. 

	By the second condition, we conclude that the subquiver $\downarrow{j} \subseteq \Gamma$ has no oriented cycle since it contains no path of length $\geq s+2$. Since there is a path of length $s+1$ from $j$ to $i$, we conclude that $\ell(\downarrow{j}) = s+1$. But this contradicts maximality of $s$ in Lemma \ref{LemmaC}. 
\end{proof} 

Finally, we arrive at the proof of the main theorem.
\begin{thm}\label{maintheorem} Let $\Lambda$ be an Artin algebra with radical square zero. Then we have $$\Findim \Lambda^{op} = \dell \Lambda.$$ 
\end{thm}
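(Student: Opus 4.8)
The plan is to combine the two combinatorial descriptions established in the previous sections: on the one hand, Proposition \ref{PropB} gives a purely combinatorial criterion for $\dell S_i \leq n$ in terms of paths in the valued quiver $\Gamma$, and on the other hand Lemma \ref{LemmaC} recasts the invariant $s$ (and hence $\Findim \Lambda^{op}$, via Mochizuki's result and Lemma \ref{projectiveinjective}'s accompanying dichotomy) in terms of lengths of maximal acyclic downsets $\downarrow{j}$. Since the preceding proposition already gives $s \leq \Findim \Lambda^{op} \leq \dell \Lambda \leq s+1$, the theorem reduces to the following dichotomy: \emph{either} $\dell \Lambda = s$, \emph{or} $\Findim \Lambda^{op} = s+1$; in both cases the sandwich collapses to an equality. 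So the real content is to show that whenever $\dell \Lambda = s+1$ (the only case where anything can go wrong), one also has $\Findim \Lambda^{op} = s+1$.

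First I would assume $\dell \Lambda = s+1$ and unwind what this means. There is a right simple $S_i$ with $\dell S_i = s+1 > s$, so $\dell S_i \not\leq s$, and by Proposition \ref{PropB} applied with $n = s$ there exists a path of length $s$ in $\Gamma$, say $j \to j_1 \to \dots \to j_{s-1} \to i$, such that $j$ is \emph{not} a source and such that \emph{every} path of length $s$ starting at $j$, say $j \to j'_1 \to \dots \to j'_{s-1} \to k$, ends at a sink $k$. The second condition forces $\downarrow{j} \subseteq \Gamma$ to have no oriented cycle (it contains no path of length $\geq s+1$, since any such path would extend a length-$s$ path past a sink), and since there is a length-$s$ path from $j$, we get $\ell(\downarrow{j}) = s$, consistent with Lemma \ref{LemmaC}. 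Now pass to the opposite algebra: $j$ not being a source in $\Gamma$ means $j$ is not a sink in $\Gamma^{op}$, so the left simple $T_j$ is not injective; and $\ell(\downarrow{j}) = s$ in $\Gamma$ translates (via the syzygy description in the proof of Lemma \ref{LemmaC}) to $\pdim T_j = s < \infty$, i.e. $T_j$ is a left simple of finite projective dimension exactly $s$ which is not injective. Applying the duality $D$, we obtain a \emph{right} simple $S := D T_j$ with $\pdim_{\Lambda^{op}}$... — more directly, I would instead run Lemma \ref{LemmaA}'s opposite-side analogue: since $j$ is not a sink in $\Gamma$, i.e. there is an arrow $i \to j$... actually the cleanest route is: the fact that every maximal path from $j$ ends at a sink, combined with $j$ not being a source, is precisely the combinatorial shadow of the failure of condition ii) of the lemma in Section 3 for the left simple of projective dimension $s$.

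The key step, then, is to identify the left simple $T$ with $\pdim T = s$ and $T^* \neq 0$ demanded by part ii) of the Section 3 lemma. I claim $T = T_j$ works: $\pdim T_j = s$ was shown above, and $T_j^* \neq 0$ should follow from $j$ not being a source in $\Gamma^{op}$-dual language, i.e. from the hypothesis that $j$ is \emph{not a source in $\Gamma$} being handled correctly — one must be careful about which side the stars and sources/sinks live on. Concretely: in $\Gamma^{op}$ the vertex $j$ is not a sink, so by the opposite-side version of Lemma \ref{LemmaA} (equivalences $b) \iff c)$ for $\Lambda^{op}$) we get $T_j^{*} \neq 0$ provided $j$ is also a source in $\Gamma^{op}$ or not a sink in $\Gamma^{op}$; "not a sink in $\Gamma^{op}$" $=$ "not a source in $\Gamma$", which is exactly our hypothesis on $j$. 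Hence $T_j^* \neq 0$ and $\pdim T_j = s$, so by the contrapositive of $i) \implies ii)$ (equivalently, the direction $\neg ii) \implies \neg i)$) in the Section 3 lemma, $\Findim \Lambda^{op} \neq s$, forcing $\Findim \Lambda^{op} = s+1 = \dell \Lambda$. Together with the case $\dell \Lambda = s$ (where the sandwich $s \leq \Findim \Lambda^{op} \leq \dell \Lambda = s$ does the job directly), this proves $\Findim \Lambda^{op} = \dell \Lambda$ in all cases.

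The main obstacle I anticipate is purely bookkeeping: keeping straight the four-way correspondence between sources/sinks in $\Gamma$ versus $\Gamma^{op}$, projectivity/injectivity of $S_i$ versus $T_j$, and the left/right finitistic dimensions — an off-by-one or a flipped arrow here would derail the argument. A secondary subtlety is making sure that when I extract a left simple $T_j$ of projective dimension exactly $s$ from the combinatorial data, the path realizing $\ell(\downarrow{j}) = s$ genuinely certifies $\pdim T_j = s$ and not merely $\pdim T_j \leq s$; this is guaranteed by the syzygy computation in Lemma \ref{LemmaC}'s proof, since the length-$s$ path from $j$ to $i$ produces a nonzero simple summand $S_i$ (with $i$ a sink, hence $S_i$ injective but — crucially, to be checked — this is compatible) in $\Omega^s T_j$ that does not die at the next step. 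I would double-check this last point carefully, but modulo that and the bookkeeping, the proof is a short deduction from the two combinatorial lemmas already in hand.
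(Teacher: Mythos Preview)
Your proposal is correct and follows essentially the same route as the paper's proof: reduce via the sandwich $s \leq \Findim \Lambda^{op} \leq \dell \Lambda \leq s+1$ to the case $\dell \Lambda = s+1$, use Proposition~\ref{PropB} to extract a vertex $j$ that is not a source and from which every length-$s$ path ends at a sink, deduce $\pdim T_j = \ell(\downarrow{j}) = s$, and then use that $j$ is not a sink in $\Gamma^{op}$ together with Lemma~\ref{LemmaA} (applied to $\Lambda^{op}$) to get $T_j^* \neq 0$, forcing $\Findim \Lambda^{op} = s+1$. The only cosmetic difference is that you conclude by invoking the equivalence $i)\!\iff\! ii)$ of the Section~3 lemma, whereas the paper re-derives that implication in place by writing down $N = \mathrm{coker}(T_j \hookrightarrow {}_\Lambda\Lambda)$ with $\pdim N = s+1$; these are the same argument. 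Your bookkeeping worries about sources/sinks under $\Gamma \leftrightarrow \Gamma^{op}$ and about $\pdim T_j$ being exactly $s$ (not merely $\leq s$) are handled correctly, and the discursive middle paragraph (the aborted appeal to $D$) can simply be deleted.
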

\begin{proof} If $\dell \Lambda = s$ the inequality $s \leq \Findim \Lambda^{op} \leq \dell \Lambda \leq s+1$ gives the result, and so we may assume that $\dell \Lambda = s+1$. We will show that $\Findim \Lambda^{op} = s+1$ using a similar argument to the previous proof. 

	Since $\dell \Lambda > s$, by Proposition \ref{PropB} we see that there exists a sink $i \in \Gamma_0$ and a path of length $s$ in $\Gamma$ 
\begin{align*}
	j \to j_1 \to \dots \to j_{s-1} \to i	
\end{align*}
such that $j$ is not a source in $\Gamma$, and for which every path of length $s$ 
\begin{align*}
	j \to j'_1 \to \dots \to j'_{s-1} \to k
\end{align*}
ends in a sink vertex $k$. As seen above, this implies that $\pdim T_j = \ell(\downarrow{j}) = s$. 

We now claim that $T_j^* \neq 0$. Note that the condition that $j$ is not a source in $\Gamma$ means that $j$ is not a sink in $\Gamma^{op}$. Applying Lemma \ref{LemmaA} to $\Lambda^{op}$, we obtain that $T_j^* \neq 0$ as claimed. Finally, since $T_j$ is simple there is an embedding $\varphi: T_j \hookrightarrow \Lambda^{op}$, and letting $N = {\rm coker}(\varphi)$, we obtain $\Omega N = T_j$ and so that $\pdim N = s+1$. This shows that $s+1 \leq \Findim \Lambda^{op} \leq \dell \Lambda = s+1$, and so $\Findim \Lambda^{op} = \dell \Lambda$. 
\end{proof}

\bibliographystyle{alpha} 
\bibliography{rad.bib} 

\end{document}